\newtheorem{proposition}{Proposition}
\newtheorem*{lem}{Lemma} 
\newtheorem*{theo}{Theorem}
\theoremstyle{definition}
\newtheorem*{rema}{Remark}
\newcommand{\beq}{\begin{equation}}
\newcommand{\eeq}{\end{equation}}
\begin{document}

\title
{On polyhedra inscribed in $S^2$, with approximately equal edges}

\author
{Endre Makai, Jr.
}

\date{}

\numberwithin{equation}{section}

\maketitle

\medskip

{\centerline{Alfr\'ed R\'enyi Institute of Mathematics, ELKH}}

{\centerline{H-1364 Budapest, P.O. Box 127, Hungary}}

{\centerline{{\rm{http://www.renyi.hu/\~{}makai}}}}

{\centerline{{\tt{makai.endre@renyi.hu}}}}

\medskip

\medskip

\medskip

{\bf{Abstract.}} 
We consider triangle faced convex polyhedra inscribed in the unit sphere $S^2$
in ${\Bbb{R}}^3$.
One way of measuring their deviation from regular polyhedra with triangular
faces is to consider the quotient of the lengths of the longest and the
shortest edges. If the number of faces tends to infinity, and the polyhedron
with this number of faces varies, then the limit inferior of this quotient 
is $2 \sin 36^{\circ } = 1.1756 \ldots $.

\medskip

{\bf{2010 Mathematics Subject Classification.}}
Primary: 52B10; 
Secondary: 52A40

{\bf{Keywords and phrases.}}
Convex polyhedra, uniformity of the networks

\medskip

\section{Introduction}

We consider convex polyhedra inscribed in the unit sphere in the
3-dimen\-sion\-al Euclidean space, having $m$ faces, all of which are
triangles.
Let $\eta$ denote the quotient of the length of the maximal edge and the
length of the minimal edge.
We have $\eta = 1$ if and only if the polyhedron is regular with triangular
faces (in which case $m = 4$, $8$ or $20$).
Thus in this sense $\eta$ measures the deviation of the polyhedron from the
regular triangular ones.
Since for $m > 20$ there are no regular triangular polyhedra, we have for
large $m$ \ $\eta > 1$.
There arises the question, how closely regular can such a polyhedron be in
this sense, i.e.\ how can one construct such polyhedra, with $m \to \infty$,
such that the quotient $\eta$ remains as close to $1$ as possible.
We will prove the following.


\begin{theo}
Let $e_m$ denote the infimum of $\eta = \,$length of maximal edge/length of
minimal edge, for all convex polyhedra inscribed in a unit sphere in the
$3$-dimensional Euclidean space, having $m$ faces, all of which are triangles.
Then we have $\liminf\limits_{m\to\infty} e_m = 2 \sin 36^\circ =
1.1756\ldots\ $.
\end{theo}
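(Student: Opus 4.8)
\medskip

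\emph{Plan of proof.} I would prove the two inequalities $\liminf_{m\to\infty}e_m\ge 2\sin 36^\circ$ and $\liminf_{m\to\infty}e_m\le 2\sin 36^\circ$ separately: the first says that every triangle‑faced convex polyhedron inscribed in $S^2$ with many faces has edge‑ratio essentially $\ge 2\sin 36^\circ$, the second exhibits such polyhedra with $m\to\infty$ whose edge‑ratio tends to $2\sin 36^\circ$.

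\emph{Lower bound.} Let $P$ be inscribed in $S^2$ with $m$ triangular faces, shortest edge $s$, longest edge $L$, and $\eta=L/s$; it suffices to show $\eta\ge 2\sin 36^\circ-o(1)$ as $m\to\infty$, uniformly over such $P$. If $\eta\ge 2\sin 36^\circ$ there is nothing to prove, so assume $\eta<2\sin 36^\circ<\sqrt 3$. Then every face has all three edges in $[s,\eta s]$, hence (as $\eta<\sqrt 3$) area $\ge\tfrac14\sqrt 3\,s^2$, the extremal triangle being equilateral of side $s$; summing over the $m$ faces and using that the surface area of $P$ is $<4\pi$ gives $s<\sqrt{16\pi/(\sqrt 3\,m)}$, so every edge of $P$ is $<\eta s=o(1)$ as $m\to\infty$, and in particular every face has circumradius $o(1)<1$. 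By Euler's formula the average vertex degree of a triangulated sphere is $6m/(m+4)<6$, so $P$ has a vertex $v$ of degree $k\in\{3,4,5\}$; write $v_1,\dots,v_k$ for its neighbours in cyclic order and $e_i=|vv_i|$, $f_i=|v_iv_{i+1}|$ for the edges at $v$. Take the direction from the sphere's centre to $v$ as ``upward''. Since every vertex of $P$ lies on the unit sphere, $v$ is its unique highest vertex; and since each face at $v$ has circumradius $<1$, no such face contains the sphere's centre in its plane, hence none is ``vertical'', so the $k$ faces at $v$ project injectively to the horizontal plane through the centre $O$ as $k$ nondegenerate triangles tiling a neighbourhood of $O$, whose angles $\psi_1,\dots,\psi_k$ at $O$ lie in $(0,\pi)$ and sum to $2\pi$. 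Writing $\bar v_i$ for the projection of $v_i$, one has $|\bar v_i|=\sin(\operatorname{dist}_{S^2}(v,v_i))=e_i\sqrt{1-e_i^2/4}=(1-o(1))e_i\ge(1-o(1))s$. Choosing $j$ with $\psi_j\ge 2\pi/k$ and using $\psi_j<\pi$ and the monotonicity of $\cos$ on $[0,\pi]$, the law of cosines in the triangle $O\bar v_j\bar v_{j+1}$ yields
\[
f_j^2\ \ge\ |\bar v_j|^2+|\bar v_{j+1}|^2-2|\bar v_j||\bar v_{j+1}|\cos\psi_j\ \ge\ |\bar v_j|^2+|\bar v_{j+1}|^2-2|\bar v_j||\bar v_{j+1}|\cos(2\pi/k),
\]
and, since $\cos(2\pi/k)<1$, the last expression is minimized over $|\bar v_j|,|\bar v_{j+1}|\ge(1-o(1))s$ at the common lower bound, giving $f_j^2\ge(1-o(1))\,4s^2\sin^2(\pi/k)$. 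Hence $\eta=L/s\ge f_j/s\ge(1-o(1))\,2\sin(\pi/k)\ge(1-o(1))\,2\sin 36^\circ$, because $2\sin(\pi/k)$ is least for $k=5$ among $k\in\{3,4,5\}$. Letting $m\to\infty$ gives $\liminf_{m\to\infty}e_m\ge 2\sin 36^\circ$.

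\emph{Upper bound.} For each large $N$, let $P_N$ be the convex hull of a suitably chosen $N$‑point subset of $S^2$: this is automatically a convex polyhedron inscribed in $S^2$, with all faces triangles if the points are in general position, and with $2N-4\to\infty$ faces. Start from a highly uniform point set, so that its Delaunay triangulation has almost all vertices of degree $6$, exactly twelve of degree $5$ (forced by $\sum_v(6-\deg v)=12$), and almost all edges of one common length $\ell_N$ up to a factor $1+o(1)$; then modify the points only in small neighbourhoods of the twelve degree‑$5$ vertices so that each such vertex has its five neighbours at the vertices of a near‑regular pentagon at common distance $\ell_N$ from it. The five ``link'' edges of that pentagon then have length $(1+o(1))(2\sin 36^\circ)\ell_N$, while the edges joining the pentagon to the surrounding uniform mesh can be kept within $[(1-o(1))\ell_N,(2\sin 36^\circ+o(1))\ell_N]$. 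All edges of $P_N$ then lie in this interval, so $\eta(P_N)\to 2\sin 36^\circ$, whence $\liminf_{m\to\infty}e_m\le 2\sin 36^\circ$.

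\emph{Where the difficulty lies.} I expect the real work to be the upper‑bound construction: one must realise the twelve unavoidable degree‑$5$ vertices, each in its essentially unique optimal local shape, while keeping every remaining edge within the narrow ratio $2\sin 36^\circ\approx 1.1756$ of the shortest edge — both across the ``hexagonal'' bulk and through the transition rings around the twelve defects — and keeping the polyhedron convex with vertices on $S^2$. Conceptually, why $2\sin 36^\circ$ and not $1$ appears is already visible in the local picture the lower bound isolates: at a degree‑$5$ vertex the five face‑angles at $v$ are forced close to $2\pi/5$ (the total curvature $4\pi$ is spread over $\to\infty$ vertices, so almost none is concentrated at $v$), so the faces there are base‑heavy isosceles triangles of edge‑ratio close to $\sin 72^\circ/\sin 54^\circ=2\sin 36^\circ$. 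The lower bound, by contrast, is comparatively routine once one notices its two ingredients: that a bounded edge‑ratio forces all edges to $0$, and that projecting the faces at a degree‑$\le 5$ vertex converts the closing‑up condition into $\sum\psi_i=2\pi$ with at most five summands.
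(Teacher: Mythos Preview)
Your argument is correct and takes a genuinely different route from the paper's. Both locate a vertex $v$ of degree $k\le 5$ via Euler's formula; from there the paper projects radially onto $S^2$, singles out a spherical triangle with an angle $\ge 72^\circ$, and runs a spherical--trigonometric comparison (reducing to the isosceles case and solving a fixed--point inequality $\eta\ge f(\eta)$), whereas you project the star of $v$ orthogonally to the equatorial plane and apply the planar law of cosines to the resulting fan of $k$ triangles around $O$. Your route is shorter and avoids spherical trigonometry entirely; the paper's route, on the other hand, yields the quantitative estimate $e_m\ge 2\sin 36^\circ\sqrt{1-d_k^2/4}$ valid for every $m$, not merely in the limit. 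Two of your steps would want an extra line in a finished write--up: that the projected faces actually tile a neighbourhood of $O$ without overlap (this uses that, once all edges are $o(1)$, the centre lies in $\mathrm{int}\,P$ and $\partial P$ is a graph over the horizontal near the north pole $v$), and the area lower bound for triangles with side--ratio below $\sqrt 3$ (true, but any positive constant would suffice for your purposes).

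\textbf{Upper bound.} Here there is a genuine gap, and you have correctly located it. The paper does not argue abstractly that one can ``start from a highly uniform point set'' and ``modify near the twelve defects''; it fixes Kitrick's explicit construction --- the sixty spherical $(72^\circ,60^\circ,60^\circ)$ triangles coming from a dodecahedron, each cut into $n^2$ pieces by equally spaced meridians and parallels relative to its axis of symmetry --- and then verifies by direct computation that every edge length lies between $c/n$ and $2\sin 36^\circ\cdot c/n$, first asymptotically and then exactly for all $n$. Your sketch simply asserts the very thing that turns out to be delicate, namely that ``the edges joining the pentagon to the surrounding uniform mesh can be kept within $[(1-o(1))\ell_N,(2\sin 36^\circ+o(1))\ell_N]$'': controlling the transition region so that no edge overshoots the ratio $2\sin 36^\circ$ is the entire content of the paper's Proposition~2, and there is no soft reason it should hold --- it comes down to the specific identities $\cos c=\cot 36^\circ/\sqrt 3$ and $\sin^2 72^\circ\ge 3/4$. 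As written, your upper bound is a plausible plan but not a proof.
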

 

The proof of the theorem consists of the proof of two inequalities:
$$
\liminf_{m \to \infty} e_m \geq 2 \sin 36^\circ \ \ \ \text{ and }
\ \ \ \liminf_{m \to \infty} e_m \leq 2 \sin  36^\circ.
$$
The first one is proved using the idea of L. Fejes T\'oth.
The second one has been known for some time.
Namely Clinton \cite{Cl1} constructed polyhedra, for which $m \to \infty$,
which he found to satisfy
(Clinton \cite{Cl2}, \cite{Cl3}) $\eta \leq 2 \sin 36^\circ$ (presumably on
the basis of calculations on computers).
They will be shortly discussed in Section~\ref{sec:2}.
Later Kitrick \cite{Ki} has given another construction which however seemed
to be handled much more simply when calculating~$\eta$.
His paper asserts that $\eta \leq 2 \sin 36^\circ$ holds for these polyhedra
as well, however it does not contain any hint about the mathematical proof of
$\eta \leq 2 \sin 36^\circ$ either.
Actually we will prove that Kitrick's polyhedra satisfy $\eta \leq 2 \sin
36^\circ$.

This question has its origin in architecture. Namely, building engineers want
to build large spherical domes, which consist of triangular elements (faces).
For them it is important, how uniform this inscribed triangular polyhedron is,
i.e., that the above $\eta $ should be as small as possible. This has partly
esthetic reasons, and also at actual construction one can manifacture rods of
equal length, and then only the joints at the vertices are produced differently.
There are also some other objectives at building spherical domes, usually
expressed by some other minimality conditions.
The author thanks T. Tarnai for having turned his attention to this question,
which was later repeated also by L. Fejes T\'oth, who proved the asymptotic 
lower estimate $2 \sin 36^{\circ }$ for $\eta $. 

The two inequalities will be contained in Propositions \ref{prop:1} and
\ref{prop:2}, and in fact they will be proved in a bit sharper form.


\section{The lower estimate}
\label{sec:1}

\begin{proposition}
\label{prop:1}
Let a convex polyhedron inscribed in a unit sphere have $m$ faces, all of
which are triangles.
Then we have $\eta = \,$ length of maximal edge/
length of minimal edge $\geq 2 \sin 36^\circ \sqrt{1 - (d_k^2 / 4)} \geq \sin
36^\circ / \sin (30^\circ (m + 4)/m)$.
That is, we have (with $e_m$ as defined in the Theorem) $e_m \geq 2 \sin
36^\circ \sqrt{1 - (d_k^2 / 4)} \geq \sin 36^\circ / \sin (30^\circ (m +
4)/m)$, consequently $\liminf\limits_{m \to \infty} e_m \geq 2 \sin 36^\circ$.
Here $k = \frac12 m + 2$ and $d_k$ denotes the maximum of
$g(P_1, \dots, P_k) = \min\limits_{i\neq j} P_i P_j$, where $P_1, \dots, P_k$
is an arbitrary system of points on the unit sphere (and $P_i P_j$ means the
Euclidean length of the straight line segment $P_i P_j$).
\end{proposition}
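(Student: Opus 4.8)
The plan is to combine a classical counting argument for triangle-faced inscribed polyhedra with a packing estimate of L. Fejes Tóth type. First I would invoke Euler's formula: if the polyhedron has $v$ vertices, $e$ edges, and $m$ triangular faces, then $2e = 3m$ and $v - e + m = 2$, so $v = \tfrac12 m + 2 = k$. Thus the $k$ vertices $P_1,\dots,P_k$ form a point system on $S^2$, and the minimal edge length is at least $\min_{i\neq j} P_iP_j$, which in turn is at most $d_k$, the maximal possible value of the minimum pairwise distance over all $k$-point configurations on $S^2$. This gives a bound on the minimal edge. For the maximal edge, I would pass to the spherical geometry: projecting each (planar) triangular face radially onto $S^2$ yields a geodesic triangulation of $S^2$ with $m$ faces, and the key geometric observation is that some face must have a spherical angle (equivalently, by a chord-angle comparison, a Euclidean edge) that is not too small — specifically, at the vertex with the largest circumscribed "star", the local angular structure forces an edge that is long relative to the minimal edge. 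The factor $2\sin 36^\circ$ is precisely what comes out of the extremal configuration in which five triangles meet at a vertex and the ratio of the "long" side to the "short" side of the relevant triangle is $2\sin 36^\circ$ — the golden-ratio-flavored constant of the icosahedral star.

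The heart of the argument — and the step I would credit to L. Fejes Tóth — is the inequality controlling the maximal edge from below in terms of $d_k$. Concretely, I expect one picks the vertex $P$ where the spherical triangles around it are, on average, "largest," uses that the angles around $P$ sum to $2\pi$ while there are at most $6 - 12/v < 6$ faces per vertex on average, hence some vertex has at most five incident faces; at such a vertex the largest of the incident spherical angles is at least $2\pi/5 = 72^\circ$, and then the side of a spherical (or Euclidean) isosceles triangle with apex angle $72^\circ$ and the two equal sides each at least the minimal edge has its base at least $2\sin 36^\circ$ times the minimal side length — but one must be careful, since the two equal sides are chords of length at least $d_{\min}$, not exactly $d_{\min}$, and the sphere's curvature enters. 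This is where the correction factor $\sqrt{1 - d_k^2/4}$ appears: it is the cosine of half the angular radius associated with the distance $d_k$, converting between chord length and the quantity that appears naturally in the angle comparison. I would set this up as: (longest edge) $\ge 2\sin 36^\circ \cdot$ (shortest edge) $\cdot \sqrt{1 - d_k^2/4}$, using that the shortest edge is itself bounded.

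The remaining two steps are more routine. To get the second, cleaner inequality $\geq \sin 36^\circ / \sin(30^\circ(m+4)/m)$, I would plug in a known upper bound for $d_k$ in terms of $k$ — the elementary bound coming from the fact that $k$ spherical caps of angular radius $\arcsin(d_k/2)$ can be packed disjointly, so their total area is at most $4\pi$, giving $k \cdot 2\pi(1 - \sqrt{1 - d_k^2/4}) \le 4\pi$, i.e. $\sqrt{1 - d_k^2/4} \ge 1 - 2/k = 1 - 4/(m+4) = m/(m+4)$; then a short trigonometric manipulation (writing $1 - d_k^2/4$ via a half-angle and comparing with $\sin(30^\circ(m+4)/m)$) yields the stated form. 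Finally, letting $m \to \infty$: since $d_k \to 0$ as $k \to \infty$ (point systems on a fixed sphere must crowd), $\sqrt{1 - d_k^2/4} \to 1$, and the first bound gives $\liminf_m e_m \ge 2\sin 36^\circ$. The main obstacle I anticipate is making the angle-to-chord comparison fully rigorous on the sphere — in particular, justifying that at the chosen vertex the relevant triangle really does have one "long" edge and that curvature only helps (or is accounted for exactly by the $\sqrt{1 - d_k^2/4}$ factor) rather than degrading the constant; handling the degenerate possibility that the extremal vertex has fewer than five incident faces (where the bound only improves) is comparatively easy.
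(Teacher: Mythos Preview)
Your overall strategy---Euler's formula to count vertices, locate a vertex of degree at most five, extract a spherical angle $\geq 72^\circ$, and compare the opposite side to the adjacent sides---matches the paper's. But the curvature correction $\sqrt{1-d_k^2/4}$ does not arise the way you describe. In the spherical triangle with angle $\gamma\geq 72^\circ$ at $C$ and adjacent sides $a\leq b$, after reducing to the isosceles case one obtains $\sin(c/2)/\sin(a/2)\geq 2\sin 36^\circ\cos(a/2)$. To convert $\cos(a/2)$ into $\sqrt{1-d_k^2/4}$ you would need the chord of $a$ to be \emph{at most} $d_k$; but $a$ is an edge of the polyhedron, so its chord is at \emph{least} the minimal edge, and that inequality points the wrong way. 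The paper instead bounds $a$ from above through $c$: since the chord of $c$ is at most the maximal edge $\leq\eta d_k$ and $\sin a\leq \sin(c/2)/\sin 36^\circ$, one arrives at a self-referential inequality $\eta\geq f(\eta)$ with $f$ decreasing. The claimed bound $\mu=2\sin 36^\circ\sqrt{1-d_k^2/4}$ is precisely the fixed point $f(\mu)=\mu$, so $\eta<\mu$ would force $\eta<f(\eta)$, a contradiction. This fixed-point manoeuvre is the idea missing from your plan.

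Two further gaps. First, radial projection of the faces onto $S^2$ yields a spherical triangulation only when the centre of the sphere lies in the interior of the polyhedron; the paper handles the contrary case (all vertices on a closed half-sphere) by a separate packing lemma for $k\geq 12$ points on a half-sphere, which you do not address. Second, your cap-area estimate $\sqrt{1-d_k^2/4}\geq 1-2/k=m/(m+4)$ is correct but strictly too weak for the stated second inequality: it would give $\sin 36^\circ/\sin\bigl(30^\circ(m+4)/m\bigr)$ only if $\sin\bigl(30^\circ(m+4)/m\bigr)\geq (m+4)/(2m)$, and this fails since $\sin(30^\circ x)<x/2$ for every $x>1$. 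The exact form requires Fejes~T\'oth's sharper bound $d_k^2\leq 4-\csc^2\bigl(30^\circ k/(k-2)\bigr)$, not the naive area comparison. Your route does suffice for the $\liminf$ conclusion, since either bound on $\sqrt{1-d_k^2/4}$ tends to $1$.
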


The idea of the proof is due to L. Fejes T\'oth (oral communication).


\begin{proof}
By \cite[p.\ 115]{FT} we have
$$
d_k^2 \leq 4 - \frac{1}{\sin^2\left(\frac{k}{k - 2}\right) \cdot 30^\circ} = 4
- \frac{1}{\sin^2\left(\frac{m + 4}{m} \cdot 30^\circ\right)}.
$$
Hence
$$
2\sin 36^\circ \sqrt{1 - \frac{d_k^2}{4}} \geq \sin 36^\circ \Bigm/ \sin
\left(\frac{m + 4}{m} \cdot 30^\circ\right).
$$
Thus it suffices to show the first inequality of the proposition.

By Euler's theorem on polyhedra $k = \frac{m}{2} + 2$ is the number of
vertices of the polyhedron (\cite[p.\ 114]{FT}), and there is a vertex to
which at most five edges are adjacent (\cite[p.\ 15 (6)]{FT}).

We see easily (by convexity) that a straight line segment connecting two
vertices of the polyhedron of minimal distance is an edge of the polyhedron.
Thus the minimal edge of the polyhedron has length $\leq d_k$.

Let us suppose first that the vertices of the polyhedron are on a closed
half-sphere.
Let us consider a supporting plane of the polyhedron separating (not strictly)
the polyhedron and the centre of the sphere, having a maximal distance from
the centre of the sphere.
Let this plane intersect the sphere in a circle $C$ of radius~$r$.
Then it is easily seen that the intersection of the polyhedron and this plane
contains the centre of the circle~$C$.
Thus the polyhedron has an edge (in the plane of the circle) of length $\geq 2
r/\sqrt{3}$.

We may suppose that the length of the minimal edge of the polyhedron is $\geq
\bigl(2r/\sqrt{3}\bigr)/2 \sin 36^\circ$.
Let us consider a half-sphere with the above circle $C$ as equator.
Let us project the spherical cap (on the unit sphere) bounded by $C$ that
contains the vertices of the polyhedron by rays perpendicular to the plane of
$C$ to the half-sphere above our circle $C$.
One sees easily that this mapping is distance-increasing (not strictly).
This is to be shown only for the ``vertical'' coordinates, and is a
consequence of the monotonity of $\sqrt{r^2 - a^2} - \sqrt{r^2 - b^2}$, as a
function of $r$, for $0 \leq a \leq b \leq r$.
Therefore by this projection the vertices of the polyhedron are carried over
to points $P_1',\dots, P_k'$ on the half-sphere, satisfying
$$
\min_{i \neq j} P_i' P_j' \geq \frac{2}{\sqrt{3}} r \bigm/ 2 \sin 36^\circ >
0.9822 r.
$$


\begin{lem}
Let $P_1',\dots, P_k'$, $k \geq 12$, be points on a half-sphere of radius $r$.
Then
$$
\min_{i \neq j} P_i' P_j' \leq \frac{24 \sqrt{6} + 11}{145} r < 0.9626 r.
$$
\end{lem}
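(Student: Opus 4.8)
The plan is to run a spherical area (packing) argument, the only genuine point of care being that the caps centred near the equator of the half‑sphere protrude past it. We may assume $P_1',\dots,P_k'$ are pairwise distinct, for otherwise $\min_{i\neq j}P_i'P_j'=0$ and there is nothing to prove. Let $2\rho\in(0^\circ,180^\circ]$ be the minimal \emph{angular} distance between two of the points, so $\rho\in(0^\circ,90^\circ]$ and $\min_{i\neq j}P_i'P_j'=2r\sin\rho$; hence it suffices to bound $\sin\rho$. First I would note that the $k$ open spherical caps $C_1,\dots,C_k$ of angular radius $\rho$ centred at $P_1',\dots,P_k'$ are pairwise disjoint: if $C_i$ met $C_j$, then $P_i'$ and $P_j'$ would be at angular distance $<2\rho$, contradicting the choice of $\rho$.

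Next I would confine all these caps inside one slightly enlarged cap. Since each $P_i'$ lies on the \emph{closed} half‑sphere, its angular distance from the pole of that half‑sphere is at most $90^\circ$, so every point of $C_i$ is at angular distance $<90^\circ+\rho$ from that pole. Hence $\bigcup_{i=1}^{k}C_i$ lies in the spherical cap of angular radius $90^\circ+\rho$, whose area is $2\pi r^2\bigl(1-\cos(90^\circ+\rho)\bigr)=2\pi r^2(1+\sin\rho)$, while each $C_i$ has area $2\pi r^2(1-\cos\rho)$. Comparing areas and using $k\ge 12$ together with $1-\cos\rho\ge 0$ gives
$$
12\,(1-\cos\rho)\ \le\ k\,(1-\cos\rho)\ \le\ 1+\sin\rho,
$$
that is, $\sin\rho+12\cos\rho\ge 11$.

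It remains to read the numerical bound off from this inequality. Writing $\sin\rho+12\cos\rho=\sqrt{145}\,\cos(\rho-\psi)$ with $\psi=\arctan\tfrac1{12}$, the inequality becomes $\cos(\rho-\psi)\ge 11/\sqrt{145}$, i.e.\ $\rho\le\rho^{\ast}:=\psi+\arccos\bigl(11/\sqrt{145}\bigr)$, and one checks $\rho^{\ast}<90^\circ$. Since $\sin$ increases on $[0^\circ,90^\circ]$, the addition formula (with $\sin\psi=1/\sqrt{145}$, $\cos\psi=12/\sqrt{145}$, $\sin(\arccos\tfrac{11}{\sqrt{145}})=\sqrt{24}/\sqrt{145}=2\sqrt6/\sqrt{145}$) yields
$$
\sin\rho\ \le\ \sin\rho^{\ast}\ =\ \frac{11}{145}+\frac{24\sqrt6}{145}\ =\ \frac{24\sqrt6+11}{145},
$$
so $\min_{i\neq j}P_i'P_j'=2r\sin\rho\le \dfrac{2\,(24\sqrt6+11)}{145}\,r<0.9626\,r$, as asserted. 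The only step that is not purely mechanical is the confinement of the caps: enlarging the target region from the bare half‑sphere (area $2\pi r^2$) to the cap of angular radius $90^\circ+\rho$ (area $2\pi r^2(1+\sin\rho)$) is exactly what makes the estimate both correct and sharp enough — an area count against the half‑sphere alone would be too wasteful. Disjointness of the caps and the closing trigonometric manipulation are routine.
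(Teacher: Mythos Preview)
Your argument is correct and is exactly the paper's: pack the $k\ge 12$ disjoint caps of angular radius $\rho$ into the enlarged cap of angular radius $90^\circ+\rho$, compare areas to get $12(1-\cos\rho)\le 1+\sin\rho$, and solve. The paper phrases the same inequality via cap heights $r-\sqrt{r^2-(d/2)^2}$ and $r+d/2$, which after the substitution $d=2r\sin\rho$ is identical to yours. Note that your final constant $\dfrac{2(24\sqrt6+11)}{145}$ carries a factor $2$ absent from the displayed formula in the statement; your value is the one consistent with the numerical bound $0.9626\,r$ (indeed $\tfrac{24\sqrt6+11}{145}\approx 0.4813$), so the displayed closed form in the statement is evidently missing that factor while your computation is right.
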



\begin{proof}
The spherical caps with centres $P_i'$ and (Euclidean) diameter
$\min\limits_{i \neq j} P_i' P_j'$ are non-overlapping and are on the
spherical surface $S$ of a segment of the sphere of radius $r$ containing the
half-sphere, of height $r + \frac12 \min\limits_{i \neq j} P_i'P_j'$.
The area of the surface $S$ is $2r^2 \pi\Bigl(r + \frac12 \min\limits_{i \neq
  j} P_i'P_j'\Bigr)$, and the areas of the circles are $2r^2 \pi\biggl(r -
\sqrt{r^2 - \Bigl(\frac12 \min\limits_{i \neq j} P_i'P_j'\Bigr)^2}\biggr)$.
Their quotient is at least $12$, whence the lemma follows.
\end{proof}


We continue with the proof of the proposition.
By \cite[pp. 114-115]{FT} we have for $k \leq 12$
$$
2 \sin 36^\circ \sqrt{1 - \frac{d_k^2}{4}} \leq 2 \sin 36^\circ \sqrt{1 -
  \frac{d_{12}^2}{4}} = 1 \ \ \ (\leq \eta).
$$
Thus we can suppose $k > 12$.
Then the above considerations, together with the lemma, show that the vertices
are not on a closed half-sphere, unless $\eta \geq 2 \sin 36^\circ$.

Thus we may suppose that the centre of our unit sphere is in the interior of
the polyhedron.
We project the edges of the polyhedron on the surface of the sphere.
Thus we obtain a tiling on the sphere, consisting of spherical triangles
corresponding to the faces of the polyhedron.
There is a vertex to which at most five edges are adjacent, thus there is a
spherical triangle with an angle $\geq 72^\circ$.

Now let us suppose that
$$
\eta < \mu = 2 \sin 36^\circ \sqrt{1 - \frac{d_k^2}{4}}.
$$
Then the longest edge has a length $\leq \eta d_k < 2 \sin 36^\circ \sqrt{1 -
  (d_k^2/4)} \cdot d_k \leq 2\sin 36^\circ$.
Thus each side of every spherical triangle in the tiling has length $<
\frac{\pi}{2}$.

Let $ABC$ be a spherical triangle in the tiling, with maximal angle $\gamma =
\sphericalangle ACB \geq 72^\circ$.
Let $\widetilde{AB} = c$, $\widetilde{BC} = a$, $\widetilde{CA} = b$
($\widetilde{\phantom{AB}}$ meaning the spherical length of the respective
side), $a \leq b$.
If $\gamma$ and $a$ are fixed and $b$ is decreased to $b = a$, $c$ will
decrease too, to $c = c_0$.

For $a = b$ the quotient $AB/BC$ of the (Euclidean) lengths is $\sin
\frac{c}{2} \bigm/ \sin \frac{a}{2}$.
By spherical trigonometry $\sin \frac{c_0}{2} = \sin \frac{\gamma}{2} \cdot
\sin a$, so $\sin \frac{c_0}{2} \bigm/ \sin \frac{a}{2} = 2 \sin
\frac{\gamma}{2} \cdot \cos \frac{a}{2}$.
So for our triangle $ABC$ we have $\sin \frac{c}{2} \bigm/ \sin \frac{a}{2}
\geq 2 \sin 36^\circ \cdot \cos \frac{a}{2}$.

Since the minimal edge of the polyhedron has (Euclidean) length $\leq d_k$, we
have $c_0 \leq c \leq 2 \arcsin \left(\frac12 \eta d_k\right)$.

We have
$$
\eta \geq \sin \frac{c}{2} \Bigm/ \sin \frac{a}{2} \geq 2 \sin 36^\circ \cdot
\cos \frac{a}{2} \geq 2 \sin 36^\circ \cdot \cos \frac{a_1}{2},
$$
where $a_1 \leq \frac{\pi}{2}$ is defined by $\frac12 \eta d_k = \sin 36^\circ
\cdot \sin a_1$.
(As above stated, we have $\frac12 \eta d_k \leq \sin 36^\circ$.)
Namely
$$
\sin a = \sin \frac{c_0}{2} \Bigm/ \sin \frac{\gamma}{2} \leq \frac{\eta
  d_k}{2} \Bigm/ \sin 36^\circ = \sin a_1.
$$
Thus
\begin{align*}
\eta \geq 2 \sin 36^\circ \cdot \cos \frac{a_1}{2} &= 2 \sin 36^\circ
\sqrt{\frac12 \Bigl(1 + \sqrt{1 - \sin^2 a_1}\Bigr)} \\
&= 2\sin 36^\circ \sqrt{\frac12 \left(1 + \sqrt{1 - \left(\frac{\eta
      d_k}{2\sin 36^\circ}\right)^2}\right)} =: f(\eta),
\end{align*}
where the last equality holds by definition.

Let us take into account that for $1 \leq \eta \leq 2 \sin 36^\circ$ $f(\eta)$
is decreasing and $\mu = f(\mu)$ (this holds, since $d_k < \sqrt{2}$).
Thus we see that $\eta < \mu$ implies $\eta < f(\eta)$, which is a
contradiction.
Hence the indirect assumption $\eta < \mu$ cannot hold, which proves the
proposition.
\end{proof}


\begin{rema}
The best upper estimate of $d_k$ known at present is due to Robinson \cite{Ro}.
We have used the estimate of Fejes T\'oth \cite{FT} because of its simplicity
and because it is already asymptotically sharp, \cite[p.\ 114]{FT}.
Thus the utilisation of the results of Robinson \cite{Ro} yields only a slight
improvement for large~$m$.
\end{rema}


\section{The upper estimate}
\label{sec:2}

\subsection*{A) Clinton's polyhedra}

We first sketch the construction of Clinton \cite{Cl1}.
He considered first a regular icosahedron inscribed in a unit sphere and then
projected it radially (i.e.\ from the centre of the sphere) to the surface of
the sphere.
In the spherical tiling thus obtained each spherical edge was divided into $n$
equal parts ($n \geq 1$ integer).
The dividing points are then carried back by radial projection to the surface
of the inscribed icosahedron.
In this way on a face of the icosahedron we obtain points $A_1, \dots, A_{n -
  1}$; $B_1, \dots, B_{n - 1}$; $C_1, \dots, C_{n - 1}$, these points lying on
the sides $a$, $b$, $c$ of the face, respectively, in this order and following
each other in the positive orientation.

Now draw through each $A_i$ parallels to $b$ and $c$, through each $B_j$
parallels to $c$ and $a$ and through each $C_k$ parallels to $a$ and $b$.

If we replace the points $A_1, \dots, C_{n - 1}$ by points $A_1', \dots, C_{n
  - 1}'$ which lie on the same sides in the same order but divide the sides
into $n$ equal parts, then drawing the parallels similarly we obtain a
subdivision of the face of the icosahedron into $n^2$ regular triangles.
Let $P'$ be one of the vertices of one of these regular triangles, which is
not a vertex of the face of the icosahedron; it lies on three of these
parallels through some points $A_i'$, $B_j'$, $C_k'$ (counting now the sides
of the face to these parallels, too). 
Consider now the corresponding parallels through the respective points $A_i$,
$B_j$, $C_k$.
These will not be concurrent in general but will bound a regular triangle
$T(P')$.
We denote the centre of this triangle by~$P$.
If $P'$ is a vertex of the face of the icosahedron, let $P = P'$.
In any case we denote the radial projection of $P$ to the surface of the
sphere by $Q$.

The vertices of Clinton's polyhedron will be all the points $Q$ (derived from
all the points $P'$ on all the faces of the icosahedron).
Two vertices $Q$ and $\overline{Q}$ will be connected by an edge if and only
if the points $P'$ and $\overline{P'}$ corresponding to them lie on the same
face of the icosahedron and are connected by an edge (in the subdivision of
the face into the $n^2$ regular triangles).
The number of faces of Clinton's polyhedron is $m = 20 n^2$.

When trying to check that for Clinton's polyhedron for $n \to \infty$ there
holds $\eta \to 2 \sin 36^\circ$, we have to determine the edge lengths of
this polyhedron asymptotically.
This leads to the determination of the maximum and the minimum of a certain
function of two variables.
However this function is given in a rather involved way and its discussion
could be accomplished probably only by lengthy calculations and possibly by
using computers.
Thus for Clinton's polyhedra the relation $\eta \to 2 \sin 36^\circ$ is still
only a conjecture.

Clinton \cite{Cl1} has given also another construction which he also found to
satisfy $\eta \to 2 \sin 36^\circ$.
The only difference is that now he derives from the points $P'$, instead of
the points $Q$ derived above, rather the points $R$ which are defined in the
following way.
For $P'$ a vertex of a face of the icosahedron let $R = P'$.
Otherwise consider the above defined regular triangle $T(P')$.
Projecting its vertices radially on the surface of the sphere we obtain the
vertices of another planar triangle $T_1(P')$.
The incentre of $T_1(P')$ will be projected radially on the surface of the
sphere, to obtain the point $R$.
The vertices of this second polyhedron of Clinton will be all the points $R$,
and two vertices $R$ and $\overline{R}$ will be connected by an edge under the
same condition as above for $Q$ and $\overline{Q}$.
However for this second polyhedron the (asymptotic) calculations seem to be
even more complicated, thus the relation $\eta \to 2 \sin 36^\circ$ is only a
conjecture, in this case as well.


\subsection*{B) Kitrick's polyhedra}

Now we recall the definition of Kitrick's polyhedra, \cite{Ki}.
Consider a regular dodecahedron inscribed in a unit sphere.
Project its faces radially to the surface of the sphere.
Thus we obtain a tiling on the sphere consisting of regular spherical pentagons.
Decompose each of these pentagons to five isosceles spherical
triangles of common vertex the centre of the pentagon and bases the sides of
the pentagon.
The angles of these triangles are $72^\circ$, $60^\circ$, $60^\circ$.
Each of these triangles will be subdivided into $n^2$ smaller triangles in the
same way.

Let us suppose the axis of symmetry of one of these triangles is the equator
of the sphere.
Now divide the equal sides of our isosceles triangle into $n$ equal parts ($n
\geq 1$ integer).
Through the dividing points draw (the portions inside the triangle of the)
meridians and parallels.
These divide our spherical triangle into $2n$ triangles and $n^2 - n$
symmetric quadrangles (which are bounded by arcs of great and small circles).
Then take every second point of division on each meridian, beginning from the
ones on the equal sides of our isosceles triangle and connect them by
(shorter) arcs of large circles, like in the schematic Fig.~1.


\begin{center}

\includegraphics[height=100mm]{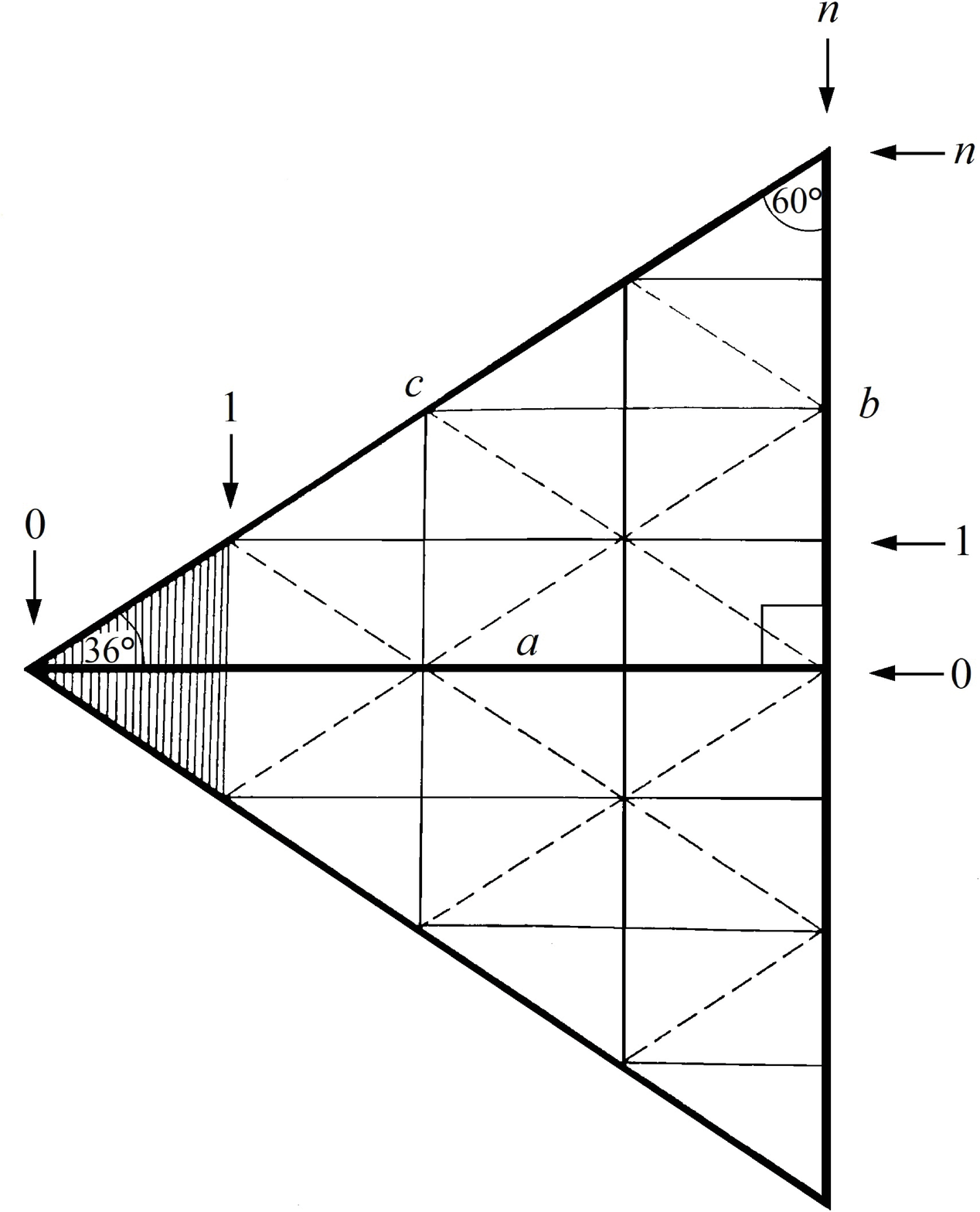}

\medskip
{\small {Figure 1}}
\end{center}


Proceeding similarly for each of the spherical triangles of angles $72^\circ$,
$60^\circ$, $60^\circ$ in our tiling we obtain a new tiling consisting of
spherical triangles.
Replacing each spherical triangle of this new tiling by a planar triangle
having the same vertices we obtain a polyhedron.
This is Kitrick's polyhedron.
The number of its faces is $m = 60 n^2$.

We have not shown yet that this polyhedron is convex.
However this will follow from $\eta \leq 2 \sin 36^\circ < \sqrt{2}$
(cf.\ Proposition \ref{prop:2}).
Namely by elementary geometry
$\eta < \sqrt{2}$ implies that each face 
of the polyhedron is an acute triangle, 
which in turn implies that the
polyhedron, which is inscribed in a sphere, is convex.


\begin{proposition}
\label{prop:2}
We have for Kitrick's polyhedra, defined above, for every integer $n \geq 1$
(and with $e_m$ as defined in the Theorem) $e_{60 n^2} \leq \eta =$ length of
maximal edge / length of minimal edge $= 2\sin 36^\circ \cos
\bigl[\bigl(\arccos (\cot 36^\circ / \sqrt{3})\bigr) / 2n\bigr] = 2\sin
36^\circ \cos \bigl((37.37^\circ  \ldots)/2 n\bigr) < 2 \sin 36^\circ$.
Consequently we have $\liminf\limits_{m \to \infty} e_m \leq 2 \sin 36^\circ$.

More exactly the maximal (minimal) edges of these polyhedra are the bases
(equal sides) of the faces (corresponding to the shaded spherical triangle in
Fig.~1) which are isosceles triangles and contain the vertices with angle
$72^\circ$ of our subdivided spherical triangles.
\end{proposition}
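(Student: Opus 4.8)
The plan is to compute the two extreme edge lengths explicitly and then to verify that they really are extreme. Fix one of the $60$ basic isosceles spherical triangles $T_0$ of the construction: apex $O$ (the spherical centre of a face of the dodecahedron, equivalently a vertex of the regular icosahedron inscribed in $S^2$), base an edge of the corresponding regular spherical pentagon, angles $72^\circ,60^\circ,60^\circ$. Place $T_0$ symmetrically with respect to the equator, so that $O$ and the midpoint of its base lie on the equator and the base vertices $V_+,V_-$ are mirror images in it. Bisecting $T_0$ along the equator produces a right spherical triangle with angles $36^\circ,60^\circ,90^\circ$; Napier's rules applied to it give that the equal sides of $T_0$ have spherical length $s$ with $\cos s=\cot 36^\circ/\sqrt3$ (so $s=37.37\ldots^\circ$), and that its half-base has spherical length $\delta_0$ with $\cos\delta_0=2\cos 36^\circ/\sqrt3$, i.e.\ $\sin\delta_0=\sin s\,\sin 36^\circ$. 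Since all vertices of Kitrick's polyhedron lie on $S^2$, an edge whose endpoints are at spherical distance $\delta$ has Euclidean length $2\sin(\delta/2)$, so it suffices to find the extremes of $\delta$ over all edges.

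Next I would use symmetry to make the list of edges finite and explicit. The symmetry group of the construction contains the symmetry group of the dodecahedron, of order $120$, whose spherical fundamental domain is exactly the bisected $T_0$ above; hence, up to congruence, every edge of the polyhedron is represented by one meeting this fixed right triangle, and these representatives can be read off from Fig.~1. Among them is the face incident to the $72^\circ$-vertex $O$: in the triangular subdivision of $T_0$ this is the small triangle $OD_1D_1'$, where $D_1,D_1'$ are the first division points, at spherical distance $s/n$ from $O$, of the two equal sides. It is isosceles with legs $|OD_1|=2\sin(s/2n)$, and, applying the right-triangle computation above to $OD_1D_1'$ (apex angle $72^\circ$, equal sides $s/n$), its base has spherical length $d_1$ with $\sin(d_1/2)=\sin(s/n)\sin 36^\circ$, hence $|D_1D_1'|=2\sin(s/n)\sin 36^\circ$. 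Therefore
$$
\frac{|D_1D_1'|}{|OD_1|}=\frac{\sin(s/n)}{\sin(s/2n)}\,\sin 36^\circ=2\sin 36^\circ\cos\frac{s}{2n}<2\sin 36^\circ .
$$

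The heart of the proof is to show that over all edges the minimum of $\delta$ is $s/n$ and the maximum is $d_1$, attained only on the legs, resp.\ the base, of that apex face; then $\eta$ equals the quotient above and, with $s=\arccos(\cot 36^\circ/\sqrt3)$, the stated closed form follows. I would establish the bound $s/n\le\delta\le d_1$ by going through the finitely many edge types of Fig.~1 and writing each $\delta$ by spherical trigonometry in terms of the coordinates of consecutive division points of a leg. The key analytic fact is that the latitude along a leg is an increasing, concave function of arc length: its rate is $\cos 54^\circ=\sin 36^\circ$ at $O$ and decreases monotonically to $\cos 60^\circ=\tfrac12$ at $V_\pm$ (the leg meets the base, which lies on a meridian, at the base angle $60^\circ$). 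Since $d_1=2\lambda_1$, where $\lambda_1$ is the latitude gained on the first leg-division, concavity makes every single ``latitude step'' along the grid at most $d_1$; and --- this is where the ``take every second point on each meridian'' operation of the construction is indispensable --- each surviving meridian (and parallel) step spans two leg-divisions, hence has length at least $2\cdot\tfrac12(s/n)=s/n$. For the new diagonal great-circle arcs one argues, again by concavity, that joining retained points two rows apart produces an arc that is neither longer than $d_1$ nor shorter than a leg-division. Granting the two-sided bound, $\eta=2\sin 36^\circ\cos(s/2n)<2\sin 36^\circ<\sqrt2$; by the remark preceding the proposition each face is then acute and the inscribed polyhedron is convex, with $m=60n^2$ triangular faces, so $e_{60n^2}\le\eta$, and letting $n\to\infty$ gives $\liminf_{m\to\infty}e_m\le 2\sin 36^\circ$.

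I expect the main obstacle to be precisely this last verification. The construction produces several distinct families of edges --- the equal leg-divisions, the coarsened meridian and parallel pieces, and the new diagonal great-circle arcs --- and one must prove $s/n\le\delta\le d_1$ for all of them uniformly in $n$. The two delicate points are that a coarsened meridian or parallel step must not undercut the apex leg (which forces one to use the sharp lower bound $d\lambda/d\ell\ge\cos 60^\circ=\tfrac12$, valid only because the base angle is exactly $60^\circ$, and is the reason the every-second-point doubling cannot be dispensed with), and that no diagonal near the base vertices $V_\pm$, where the spherical triangles of the subdivision are largest, overshoots the apex base $d_1$; both rest on the concavity of the leg's latitude profile together with a careful account of which division points of Fig.~1 survive the coarsening.
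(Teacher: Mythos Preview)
Your setup and the computation of the ratio for the apex face are correct and match the paper exactly: with $s=\arccos(\cot 36^\circ/\sqrt3)$ one indeed gets $|D_1D_1'|/|OD_1|=2\sin 36^\circ\cos(s/2n)$. Your concavity observation about the latitude $\lambda(\ell)=\arcsin(\sin\ell\,\sin 36^\circ)$ along a leg is also correct and is essentially how the paper handles the \emph{meridional} edges: since $\lambda'$ decreases from $\sin 36^\circ$ to $\tfrac12$, the double latitude step $b_{j+1}-b_{j-1}$ is at most $2b_1=d_1$ and at least $2\cdot\tfrac12\cdot(s/n)=s/n$. For this family your argument is arguably cleaner than the paper's, which proves the lower bound by an angle-sum argument in the spherical triangle $P_{n,n}P_{n,n-2}P_{n-1,n-1}$.

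Where the proposal falls short is the \emph{diagonal} edges, and partly because your picture of the edge types is off. There are no ``parallel pieces'' in the polyhedron: the only edges are the meridional arcs $P_{i,j-1}P_{i,j+1}$ and the diagonal great-circle arcs $P_{i,j}P_{i+1,j+1}$ (one step in each index, not ``two rows apart''); the leg segments $P_{i,i}P_{i+1,i+1}$ of length $s/n$ are a special case of the latter. For diagonals, concavity of $\lambda$ alone does not do the job, because a diagonal step moves in both the meridian index $i$ and the parallel index $j$, and its length depends on the equatorial increment $a_{i+1}-a_i$ as well as on the latitude increment. The paper obtains the diagonal \emph{lower} bound by a separate monotonicity argument in $j$ (comparing $P_{i,j}P_{i+1,j+1}$ with $P_{i,j-1}P_{i+1,j}$ via the perpendicular bisector of $P_{i+1,j-1}P_{i+1,j+1}$), which yields $\widetilde{P_{i,j}P_{i+1,j+1}}\ge\widetilde{P_{i,i}P_{i+1,i+1}}=s/n$. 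The diagonal \emph{upper} bound --- precisely the case you flag as delicate, near the base vertices --- is the genuinely hard step, and the paper does \emph{not} get it from any convexity or concavity. After reducing to $\widetilde{P_{n-1,0}P_{n,1}}\le 2b_1$, it refines the grid to $nl$ parts, writes $\widetilde{P_{n-1,0}P_{n,1}}$ as a sum of $l$ diagonal arcs of the refined grid, bounds each by the worst one $\widetilde{P'_{nl-1,0}P'_{nl,1}}$ using the two monotonicities already established, and then lets $l\to\infty$ so that the asymptotic formula for that single arc (computed earlier) gives a finite inequality for the original $n$. A final elementary estimate closes the gap. This refinement-and-limit trick is the missing idea in your plan; without it (or an equivalent device) the inequality $\widetilde{P_{n-1,0}P_{n,1}}\le d_1$ for every $n$ is not established.
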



\begin{proof}
1) First we show $\lim\limits_{n \to \infty} \eta = 2 \sin 36^\circ$.

\smallskip
1.1) Consider the isosceles spherical triangle in Fig.~1.
It is cut by the equator in two right spherical triangles of angles
$36^\circ$, $60^\circ$, $90^\circ$.
The sides of these triangles opposite to these angles will be denoted by $a,
b, c$.
We have by spherical trigonometry $\cos a = 1/(2\sin 36^\circ)$, $\cos b =
\cos 36^\circ/(\sqrt{3}/2)$, $\cos c = \cot 36^\circ / \sqrt{3}$.
Because of symmetry it suffices to consider the edges of the subdivision lying
in the upper right triangle (which will be denoted by $T$) or intersecting the
side~$a$.

We number the meridians in Fig.~1 consecutively by the numbers $0, \dots, n$,
the $0$th one being the (degenerate) one passing through the vertex with angle
$36^\circ$, the $n$th one being the one containing the side $b$.
Similarly we number the parallels in Fig.~1 meeting the upper right triangle
$T$ consecutively by the numbers $0, \dots, n$ the $0$th one being the
equator, the $n$th one being the (degenerate) one passing through the vertex
with angle $60^\circ$.
Let us denote the point of intersection of the $i$th meridian and the $j$th
parallel (in $T$) by $P_{i,j}$ ($0 \leq j \leq i \leq n$).

There are two kinds of edges: those lying on meridians and those which are
diagonals of the symmetric quadrangles (bounded by arcs of great and small
circles) in Fig.~1.
(The edges on the side $c$ will be counted to the second mentioned edges.)
These two kinds of edges will be called meridional and diagonal edges,
respectively.
We will show that among the edges of both kinds asymptotically the maximal and
minimal ones are those corresponding to the Euclidean edges mentioned in
Proposition \ref{prop:2}.
That is, if we express the lengths of the edges incident to $P_{i,j}$ in the
form $f(i/n, j/n) \cdot (1/n) + $ terms of order smaller than $1/n$, then for
both cases the function $f(i/n, j/n)$ will only assume values between the two
values assumed for the edges of the shaded spherical triangle.
In view of the fact that (asymptotically) the lengths of the edges are
determined by the directional derivatives of a continuously differentiable
function -- via the mean value theorem -- this will imply the equality
$\lim\limits_{n \to \infty} \eta = 2 \sin 36^\circ$ asserted above.

\smallskip
1.2) Our spherical triangle $T$ is cut by the $i$th meridian in two parts.
One of the parts is a right triangle.
Its side lying on the side $c$ of $T$ has length $(i/n)c$, and we denote the
lengths of its sides on the side $a$ of $T$ and on the $i$th meridian by $a_i$
and $b_i$, respectively.
We have $\tan a_i = \tan ((i/n)c)\cdot \cos 36^\circ$, $\sin b_i =
\sin((i/n)c)\cdot \sin 36^\circ$.
Consequently
\begin{align*}
a_{i + 1} - a_i &\sim \frac{\cos 36^\circ}{\cos^2\left(\frac{i}{n} c\right) +
  \sin^2 \left(\frac{i}{n} c\right) \cdot \cos^2 36^\circ} \cdot
\frac{c}{n},\\
b_{j + 1} - b_j &\sim \frac{\cos \left(\frac{j}{n} c\right)\cdot \sin 36^\circ}
{\sqrt{\cos^2\left(\frac{j}{n} c\right) + \sin^2 \left(\frac{j}{n} c\right)
    \cdot \cos^2 36^\circ}} \cdot \frac{c}{n}.
\end{align*}
The point $P_{i,j}$ lies on the side of length $b_i$ of the triangular part of
$T$ at a distance $b_j$ from the equator.

The sides of the shaded spherical triangle have lengths $c/n$ and $2 b_1 \sim
2 \sin 36^\circ \cdot c/n$.
Thus we have to show that the edge lengths are asymptotically between $c/n$
and $2 \sin 36^\circ \cdot c/n$.

\smallskip
1.3) We first consider the meridional edges.
These are of the form $P_{i,j - 1} P_{i,j + 1}$ ($P_{i, -1}$ being the mirror
image of $P_{i,1}$ w.r.t.\ the equator).
The squares of their lengths are
$$
\sim (b_{j + 1} - b_{j - 1})^2 \sim \frac{4\sin^2 36^\circ}{1 + \tan^2
  \left(\frac{j}{n} c\right) \cdot \cos^2 36^\circ} \cdot
\left(\frac{c}{n}\right)^2.
$$
Here the coefficient of $(c/n)^2$ is a monotonically decreasing function of
$(j/n)c$, hence
$$
4 \sin^2 36^\circ \geq \frac{4\sin^2 36^\circ}{1 + \tan^2 \left(\frac{j}{n}
  c\right)\cdot \cos^2 36^\circ} \geq \frac{4\sin^2 36^\circ}{1 + \tan^2 c
  \cdot \cos^2 36^\circ}.
$$
Thus the upper estimate of the length of the meridional edges is proved.
For the lower estimate observe that because of $\cos c = \cot 36^\circ/
\sqrt{3}$ we have
$4 \sin^2 36^\circ / (1 + \tan^2 c \cdot \cos^2 36^\circ) = 1$.
Thus both asymptotic inequalities are proved in the case of the meridional
edges.

Now we turn to the diagonal edges.
These have the form $P_{i,j} P_{i + 1, j + 1}$ or $P_{i + 1,j} P_{i,j + 1}$.
However on account of symmetry the lengths of both are the same and their
squares are $\sim \cos^2 b_j \cdot(a_{i + 1} - a_i)^2 + (b_{j + 1} - b_j)^2
\sim$
$$
\sim \left[ \frac{\cos^2 36^\circ \cdot \cos^2 \left(\frac{j}{n}
    c\right)}{\left[1 - \sin^2 \left(\frac{i}{n}c\right) \cdot \sin^2
      36^\circ\right]^2} + \frac{\sin^2 36^\circ}{1 + \tan^2\left(\frac{j}{n}
    c\right) \cdot \cos^2 36^\circ}\right] \cdot \left(\frac{c}{n}\right)^2 .
$$
Here again the coefficient of $(c/n)^2$ is a monotonically decreasing function
of $(\bar  j / n)c$.
Recalling that $0 \leq j \leq i$ $(\leq n)$ we have to show that substituting
$(\bar j/n)c = (i/n)c$,
or $(j/n)c = 0$, resp., the coefficient of $(c/n)^2$ will be at least $1$, or
at most $4\sin^2 36^\circ$, resp.
However substituting $(j/n)c = (i/n)c$ means calculating asymptotically the
lengths of the (spherical) edges $P_{i,i} P_{i + 1, i + 1}$, which are by the
very definition of Kitrick's polyhedra equal to $c/n$.
Thus the lower estimate of the length of the diagonal edges is proved.

Now we turn to the upper estimate for the diagonal edges.
Substituting $(j/n)c = 0$ in the coefficient of $(c/n)^2$ gives
$$
\frac{\cos^2 36^\circ}{\left[1 - \sin^2 \left(\frac{i}{n} c\right)\cdot \sin^2
    36^\circ\right]^2} + \sin^2 36^\circ.
$$
This is a monotonically increasing function of $(i/n)c$ (in the interval $[0,
  c]$), thus its largest value is attained for $(i/n)c = c$, and we have to
show it is $\leq 4 \sin^2 36^\circ$.

Taking into consideration that $\cos c = \cot 36^\circ/\sqrt{3}$ this last
inequality reduces to the inequality $3/4 \leq 4 \sin^2 36^\circ \cdot \cos^2
36^\circ = \sin^2 72^\circ$, which is in fact valid.
Thus both asymptotic inequalities are proved for the diagonal edges as well as
for the meridional edges.
As stated above this proves $\lim\limits_{n \to \infty} \eta = 2 \sin 36^\circ$.

\medskip
2) Now we will follow the above asymptotic calculations by exact calculations
valid for each $n \geq 1$ and will show that the edges $P_{1, - 1} P_{1,1} /
P_{0, 0} P_{1,1}$ of the face corresponding to the shaded spherical triangle
are actually the maximal and minimal edges of Kitrick's polyhedron.
The quotient $P_{1, - 1} P_{1,1}/P_{0,0} P_{1,1}$ of their Euclidean lengths
is $\sin b_1/\sin (c/2n) = 2 \sin 36^\circ \cdot \cos (c/2n)$.

\smallskip
2.1) We will consider the meridional edges $P_{i,j - 1} P_{i,j + 1}$ and
instead of all diagonal edges all segments $P_{i,j} P_{i + 1, j + 1}$ (these
have equal lengths on account of symmetry).
We will show that for given $i$ the lengths of $P_{i,j - 1} P_{i,j + 1}$
($P_{i,j} P_{i + 1, j + 1}$, resp.)
are actually -- i.e.\ not only asymptotically -- decreasing functions of~$j$.

First we consider the edges $P_{i,j - 1} P_{i,j + 1}$.
Let us denote their arc measures (i.e.\ the smaller angles subtended by them
at the centre of the sphere) by $\widetilde{P_{i, j - 1} P_{i,j + 1}}$.
(A similar notation will be applied for other edges and segments as well.)
We will show the decreasing of these arc measures.
We have $0 \leq j \leq i - 1$.
Now consider $j$ as a real variable in the above given interval, $P_{i, j}$
meaning the point of $T$ on the $i$th meridian, having a spherical distance
$b_j = \arcsin (\sin((j/n)c) \cdot \sin 36^\circ)$ from the equator.
We show $\widetilde{P_{i,j - 1} P_{i,j + 1}}$ is decreasing in that interval.
In fact
$$
\frac{d}{dj} \widetilde{P_{i,j - 1} P_{i,j + 1}} = \frac{\sin 36^\circ}{\sqrt{1
    + \tan^2\left(\frac{j + 1}{n} c\right) \cdot \cos^2 36^\circ}} -
\frac{\sin 36^\circ}{\sqrt{1 + \tan^2\left(\frac{j - 1}{n} c\right) \cdot
    \cos^2 36^\circ}} ,
$$
which is non-positive because of $\bigl|((j + 1)/n)c\bigr| \geq \bigl|((j -
1)/n)c\bigr|$.
Evidently the same considerations show that for $0 \leq j \leq i - 1$, we have
that
$\widetilde{P_{i,j} P_{i,j + 1}}$ is a decreasing function of~$j$.

Now we turn to the segments $P_{i,j} P_{i + 1,  j + 1}$.
We show $P_{i,j} P_{i + 1, j + 1} < P_{i,j} P_{i + 1, j - 1} = P_{i,j - 1}
P_{i + 1,j}$ $(1 \leq j \leq i)$.
In fact the points $P$ on the unit sphere satisfying $PP_{i + 1, j + 1} =
PP_{i + 1, j - 1}$ form a great circle perpendicular to the $i$th meridian and
meeting the $i$th meridian at midpoint $P^*$ of its smaller arc $P_{i + 1, j -
  1} P_{i + 1, j + 1}$.
However because of $P_{i + 1, j - 1} P_{i + 1, j} > P_{i + 1, j} P_{i + 1, j +
  1}$ the point $P^*$ lies on the arc $P_{i + 1, j - 1} P_{i + 1, j}$.

Hence by projecting on the plane spanned by the $(i + 1)$st meridian we see
the entire $j$th parallel (thus also $P_{i,j}$) lies nearer to $P_{i + 1, j +
  1}$ than to $P_{i + 1, j - 1}$.

\smallskip
2.2) Now we prove the upper and lower estimates for the meridional edges.
The upper estimate follows from the fact that $\widetilde{P_{i,j - 1} P_{i,j +
    1}}$ is a monotonically decreasing function of $j$, therefore
$\widetilde{P_{i,j - 1} P_{i,j + 1}} \leq \widetilde{P_{i, -1} P_{i, 1}} =
\widetilde{P_{1, - 1} P_{1,1}}$.

For the lower estimate observe that by the same monotonity property
$\widetilde{P_{i,j - 1} P_{i, j + 1}} = \widetilde{P_{n, j - 1} P_{n, j + 1}}
\geq \widetilde{P_{n, n - 2} P_{n, n}}$.
Now consider the spherical triangle $P_{n, n}$ $P_{n, n - 2}$ $P_{n - 1, n -
  1}$.
Its angle at $P_{n, n}$ is $60^\circ$, and we have $\widetilde{P_{n, n} P_{n -
    1, n - 1}} = c/n$.
Since $\widetilde{P_{i,j} P_{i + 1, j + 1}}$ is a monotonically decreasing
function of $j$, we have
$\widetilde{P_{n,n} P_{n - 1, n - 1}} < \widetilde{P_{n, n - 1} P_{n - 1, n -
    2}} = \widetilde{P_{n, n - 2} P_{n - 1, n - 1}}$.
Since in spherical triangles larger sides are opposite to larger angles, we have
$\sphericalangle P_{n, n} P_{n, n - 2} P_{n - 1, n - 1} < 60^\circ$.
Since the sum of the angles of our spherical triangle is $> 180^\circ$, this
implies $\sphericalangle P_{n, n} P_{n - 1, n - 1} P_{n, n _ 2} > 60^\circ$.
This implies in turn that the side $\widetilde{P_{n,n} P_{n, n - 2}}$ is
greater than the side $\widetilde{P_{n, n} P_{n - 1, n - 1}} = c/n$.
Thus we have
$\widetilde{P_{i, j - 1} P_{i, j + 1}} \geq \widetilde{P_{n, n - 2} P_{n, n}} >
c/n$.

\smallskip
2.3) Last we show the upper and lower estimates for the diagonal edges.
The lower estimate follows from the fact that $\widetilde{P_{i,j} P_{i + 1, j +
    1}}$ is a monotonically decreasing function of $j$, hence
$\widetilde{P_{i,j} P_{i + 1, j + 1}} \geq \widetilde{P_{i,i} P_{i + 1, i + 1}}
= c/n$.

For the upper estimate observe that by the same monotonity property
$\widetilde{P_{i,j} P_{i + 1, j + 1}} \leq \widetilde{P_{i, 0} P_{i + 1,1}}$.
We have from the right spherical triangle\break 
$P_{i,0} P_{i + 1,0} P_{i + 1,1} \cos \widetilde{P_{i,0} P_{i + 1, 1}} = \cos
\widetilde{P_{i + 1,0} P_{i + 1,1}} \cdot \cos{P_{i,0} P_{i + 1,0}} = \cos
\widetilde{P_{1,0} P_{1,1}} \cdot \cos \widetilde{P_{i, 0} P_{i + 1,0}}$.
We will show that for $0 \leq i \leq n - 1$, we have that 
$\widetilde{P_{1,0} P_{i + 1, 1}}$
is an increasing function of $i$.
In view of the last formula it suffices to show the increasing of
$\widetilde{P_{i,0} P_{i + 1,0}}$.
For this consider $i$ as a real variable in the given interval, $P_i$ meaning
the point of $T$ on the equator, having a spherical distance $a_i = \arctan
(\tan((i/n)c) \cdot \cos 36^\circ)$ from $P_{0,0}$.
We have
$$
\frac{d}{di} \widetilde{P_{i,0} P_{i + 1, 0}} = \frac{\cos 36^\circ}{1 - \sin^2
  \left(\frac{i + 1}{n} c\right) \cdot \sin^2 36^\circ} - \frac{\cos
  36^\circ}{1 - \sin^2\left(\frac{i}{n} c\right) \cdot \sin^2 36^\circ} > 0,
$$
hence in fact $\widetilde{P_{i,0} P_{i + 1,0}}$ is increasing.

Therefore also $\widetilde{P_{i,0} P_{i + 1,1}}$ is increasing, thus
$\widetilde{P_{i,0} P_{i + 1, 1}} \leq \widetilde{P_{n - 1, 0} P_{n, 1}}$.
It remains to show $\widetilde{P_{n - 1, 0} P_{n, 1}} \leq \widetilde{P_{1, -1}
  P_{1,1}}$.

Consider for a positive integer $l$ the $nl$th Kitrick polyhedron (i.e., the
side $c$ is divided into $nl$ equal parts).
The analogues of the points $P_{i,j}$ for this new polyhedron will be denoted
by $P'_{i,j}$ $(0 \leq j \leq i \leq nl)$.
We have $P_{n - 1, 0} = P'_{(n - 1)1, 0}$, $P_{n, 1} = P'_{nl, l}$.
In view of the triangle inequality we have $\widetilde{P_{n - 1, 0} P_{n, 1}} =
\widetilde{P'_{(n - 1) 1, 0} P'_{nl, l}} \leq \sum\limits_{r = 0}^{l - 1}
\widetilde{P'_{(n - 1) l + r, r} P'_{(n - 1)l + r + 1, r + 1}}$.

Because of the monotonity of the length of the diagonal edges (shown in part
2.1) of the proof)
$\widetilde{P'_{(n - 1) l + r, r} P'_{(n - 1) l + r + 1, r + 1}} \leq
\widetilde{P'_{(n - 1) l + r, 0} P'_{(n - 1) l + r + 1, 1}}$.
However these last edges have monotonically increasing lengths, as shown in
the last paragraph, hence
$\widetilde{P'_{(n - 1)l + r, 0} P'_{(n - 1) l + r + 1, 1}} \leq
\overline{P'_{nl - 1, 0} P'_{nl, 1}}$.
From the last three inequalities we obtain $\widetilde{P_{n - 1, 0} P_{n, 1}}
\leq l \cdot \widetilde{P'_{nl - 1, 0} P'_{nl, 1}}$.

As calculated in part 1.3) of the proof for $nl$ large we have asymptotically
$$
\widetilde{P'_{nl - 1, 0} P'_{nl, 1}} \sim \sqrt{\frac{\cos^2 36^\circ}{[1 -
      \sin^2 c \cdot \sin^2 36^\circ]^2} + \sin^2 36^\circ} \cdot
\frac{c}{nl}.
$$
Using this asymptotical equality we pass in the last inequality to the limit
$l \to \infty$.
Thus we obtain the inequality
$$
\widetilde{P_{n - 1, 0} P_{n, 1}} \leq \sqrt{\frac{\cos^2 36^\circ}{[1 - \sin^2
      c \cdot \sin^2 36^\circ]^2} + \sin^2 36^\circ} \cdot \frac{c}{n} = :
2\lambda \cdot \frac{c}{n}
$$
(where the last equality holds by definition) valid for our actual $n$ (not
only asymptotically).

It has remained to show that
$\widetilde{P_{n - 1, 0} P_{n, 1}} \leq \widetilde{P_{1, -1} P_{1, 1}}$.
In virtue of the inequality showed in the last paragraph it suffices to show
$2\lambda \cdot c/n \leq \widetilde{P_{1, -1} P_{1,1}} = 2b_1 = 2 \arcsin
(\sin(c/n) \cdot \sin 36^\circ)$, or equivalently (because of $\lambda \cdot
c/n \leq \lambda c < 90^\circ$)
$\sin(\lambda \cdot c/n) \bigm/ \sin (c/n) \leq \sin 36^\circ$.
However in view of $\lambda$ $(=0.5488\ldots) < 1 $ we have that
$\sin(\lambda x) / \sin x$ is monotonically increasing for $0 < x \leq
90^\circ$, further $c/n \leq c = 37^\circ 22'\ldots$; thus $\sin(\lambda \cdot
c/n) \bigm/ \sin (c/n) \leq \sin(\lambda c) / \sin c = 0.5773\ldots < \sin
36^\circ = 0.5878\ldots$.
This ends the proof of the upper estimate of the diagonal edges.
Since all the other estimates have been already proved above, this completes
the proof of the proposition.
\end{proof}


\end{document}